\setlist[enumerate]{labelsep=*, leftmargin=1.5pc}
\setlist[enumerate]{label=\normalfont(\roman*), ref=\roman*}
\theoremstyle{plain}
\newtheorem{thm}{Theorem}[section]
\newtheorem{pro}[thm]{Proposition}
\newtheorem{lem}[thm]{Lemma}
\newtheorem{cor}[thm]{Corollary}
\theoremstyle{definition}
\newtheorem{dfn}[thm]{Definition}
\newtheorem{rem}[thm]{Remark}
\newtheorem{eg}[thm]{Example}
\newtheorem{notation}[thm]{Notation}
\DeclareMathOperator{\Hom}{Hom}
\DeclareMathOperator{\Pic}{Pic}
\DeclareMathOperator{\GL}{{GL}}
\newcommand{\cB}{\mathcal{B}}
\newcommand{\cG}{\mathcal{G}}
\newcommand{\cM}{\mathcal{M}}
\newcommand{\CC}{\mathbb{C}}
\newcommand{\TT}{\mathbb{T}}
\newcommand{\ZZ}{{\mathbb{Z}}}
\renewcommand{\tilde}{\widetilde}
\begin{document}
\author[T.\,Prince]{Thomas Prince}
\address{Mathematical Institute\\University of Oxford\\Woodstock Road\\Oxford\\OX2 6GG\\UK}
\email{thomas.prince@magd.ox.ac.uk}

\keywords{Fano manifolds, mirror symmetry.}
\subjclass[2000]{14J33 (Primary), 14J45, 14M25 (Secondary)}
\title[Mutation invariance]{Mutation equivalence of toric Landau--Ginzburg models}
\maketitle
\begin{abstract}
	Given a Fano complete intersection defined by sections of a collection nef line bundles $L_1,\ldots, L_c$ on a Fano toric manifold $Y$, a construction of Givental/Hori--Vafa provides a mirror-dual Landau--Ginzburg model. This construction depends on a choice of suitable nef partition; that is, a partition of the rays of the fan determined by $Y$. We show that toric Landau--Ginzburg models constructed from different nef partitions representing the same complete intersection are related by a volume preserving birational map. In particular, various Laurent polynomial mirrors which may be obtained from these Landau--Ginzburg models are mutation equivalent.
\end{abstract}

\section{Introduction}
\label{sec:introduction}

We fix an $n$-dimensional toric Fano manifold $Y$ and nef line bundles $L_1,\ldots, L_c \in \Pic(Y)$ such that the line bundle
\[
\Lambda := -K_Y - L_1 - \cdots - L_c
\]
is ample. We recall some standard results on the toric geometry of $Y$ to fix notation. In particular, we recall that the toric variety $Y$ contains an open dense torus $\TT_N := N \otimes_\ZZ (\CC^\star)$, where $N$ is an $n$-dimensional lattice. Letting $M$ denote the dual lattice to $N$, we recall the following exact sequence associated to $Y$:
\[
0 \longrightarrow M \stackrel{\rho^\star}{\longrightarrow} (\ZZ^R)^\star \stackrel{D}{\longrightarrow} \Pic(Y) \longrightarrow 0.
\]
Here $R$ is the number of rays of the fan $\Sigma$ determined by $Y$ and $\rho^\star$ is dual to the map $\rho$ which sends elements of the standard basis of $\ZZ^R$ to the ray generators $\{v_1, \ldots, v_R\}$ of $\Sigma$. Setting $D_i := D(e_i)$, a \emph{nef partition} of $\Sigma$ is a partition of $\{v_1, \ldots, v_R\}$ into $c+1$ sets $S_i$ such that, for each $i \in \{1,\ldots, c\}$, we have that $L_i = \sum_{j \in S_i} D_j$.

Given such a nef partition, Givental~\cite{Givental:toric} and Hori--Vafa~\cite{Hori--Vafa} describe a mirror toric Landau--Ginzburg model; this is a pair $(\breve{X},W)$ such that
\[
\breve{X} := \Big\{\sum_{j \in S_1}{z^{v_j}} = 1, \ldots, \sum_{j \in S_c}{z^{v_j}} = 1\Big\} \subset M\otimes_\ZZ \CC^\star
\]
and,
\[
W := z^{v_1} + \cdots + z^{v_n},
\]
where $z^v$ denotes the monomial in $\CC[N]$ corresponding to an element $v$ of $N$. Letting $z_1, \ldots, z_n$ denote co-ordinates on $\TT_N$, we also fix the holomorphic volume form 
\[
\omega_n := \bigwedge_{i=1}^n d\log(z_i).
\]

\begin{thm}
	\label{thm:main_result}
	Given a pair of nef partitions $\{S_i : 1 \leq i \leq c+1\}$ and $\{S'_i : 1\leq i \leq c+1\}$ of $(Y;L_1,\ldots, L_c)$, let $(\breve{X},W)$ and $(\breve{X}',W)$ denote the corresponding pair of mirror toric Landau--Ginzburg models. There is a birational map
	\[
	\varphi \colon \TT_N \dashrightarrow \TT_N
	\]
	such that $\varphi^\star(W) = W$, $\varphi^\star\omega_n = \omega_n$, and
	\[
	\varphi^\star\Big(\sum_{j \in S_i}{z^{v_j}}\Big) = \sum_{j \in S'_i}{z^{v_j}}
	\]
	 for all $i \in \{1,\ldots, c\}$.
\end{thm}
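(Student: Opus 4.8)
The plan is to construct $\varphi$ as a composition of elementary volume-preserving birational maps, each realising a combinatorial move that transfers rays between the parts of the partition. Write $f_i := \sum_{j \in S_i} z^{v_j}$ and $f'_i := \sum_{j \in S'_i} z^{v_j}$; since the superpotential $W$ is built from the complementary part in exactly the same way, the three required conditions amount to finding a volume-preserving $\varphi$ with $\varphi^\star f_i = f'_i$ for $1 \le i \le c$ and $\varphi^\star W = W$. The starting point is the given exact sequence. Because both partitions represent the same data, $\sum_{j \in S_i} D_j = L_i = \sum_{j \in S'_i} D_j$, so $\sum_{j \in S_i} e_j - \sum_{j \in S'_i} e_j$ lies in $\ker D = \im \rho^\star$ and hence equals $\rho^\star(m_i)$ for a unique $m_i \in M$. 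Concretely $\langle m_i, v_j \rangle = \chi_{S_i}(j) - \chi_{S'_i}(j)$, and adjoining the complementary part one gets $m_1, \ldots, m_{c+1}$ with $\sum_i m_i = 0$; these lattice vectors are what drive the whole construction.

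The basic building blocks are two families of volume-preserving maps. First, given $m \in M$ and a Laurent polynomial $Q \in \CC[m^\perp \cap N]$ supported on the hyperplane $\{v : \langle m, v\rangle = 0\}$, the \emph{mutation} $\varphi^\star z^v = z^v\,Q^{\langle m, v\rangle}$ is volume preserving: choosing coordinates in which $m$ is the first coordinate functional gives $\varphi^\star z_1 = z_1 Q$ and $\varphi^\star z_i = z_i$ for $i \ge 2$, so $\varphi^\star \omega_n = d\log(z_1 Q) \wedge d\log z_2 \wedge \cdots \wedge d\log z_n = \omega_n$ since $d\log Q$ is a combination of $d\log z_2, \ldots, d\log z_n$. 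Second, any monomial map induced by an element of $\Aut(N)$ is volume preserving up to sign; I will use in particular the reflection $v \mapsto v - 2\langle m, v\rangle e$, where $e \in N$ satisfies $\langle m, e\rangle = 1$, which fixes $m^\perp$ and exchanges the levels $\langle m, \cdot\rangle = \pm 1$.

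I would then reduce to \emph{elementary moves}: a pair of nef partitions related by transferring a set $A \subseteq S_a$ out of part $a$ and a set $B \subseteq S_b$ into part $a$, with $\sum_{j \in A} D_j = \sum_{j \in B} D_j$. This last equality is exactly what guarantees the existence of a single $m \in M$ with $\langle m, v_j\rangle$ equal to $+1$ on $A$, $-1$ on $B$, and $0$ elsewhere. To realise such a move, split $N$ so that $m$ is the first coordinate functional; then $A$ sits at level $+1$ and $B$ at level $-1$, the factors $Q_A := \sum_{j \in A} z^{v_j - e}$ and $Q_B := \sum_{j \in B} z^{v_j + e}$ are automatically supported on $m^\perp$, and the composite of a mutation with factor $Q_A$, the reflection in $m^\perp$, and a mutation with factor $Q_B$ sends $\sum_{j \in A} z^{v_j}$ to $\sum_{j \in B} z^{v_j}$. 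Every monomial with $\langle m, v_j\rangle = 0$ is fixed by each piece; since $A \cup B$ meets only parts $a$ and $b$, this shows $\varphi^\star f_a = f'_a$, $\varphi^\star f_b = f'_b$, leaves $f_i$ ($i \ne a,b$) and $W$ unchanged, and preserves $\omega_n$. Composing the maps attached to a chain of elementary moves connecting $\{S_i\}$ to $\{S'_i\}$ then yields the theorem.

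The hard part will be twofold. The combinatorial step — showing that any two nef partitions of $(Y; L_1, \ldots, L_c)$ are linked by a sequence of elementary moves, each obeying the class relation $\sum_A D_j = \sum_B D_j$ — must be established carefully, decomposing the difference $\sum_i m_i = 0$ into primitive relations. The more delicate analytic point is the case $|A| \ne |B|$, where a constraint genuinely changes its number of terms: here no single reflection suffices, and one must verify that the merge–reflect–split composition above is globally well defined, fixes every monomial outside $A \cup B$, and remains volume preserving after the factors $Q_A, Q_B$ are cancelled. Confirming that these factors are honest Laurent polynomials — which is precisely the content of the class relation forcing $A$ and $B$ to level sets of $m$ — is the crux on which the construction turns.
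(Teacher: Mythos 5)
Your local construction is sound, and it is in fact exactly the paper's own hypersurface argument (Proposition~\ref{pro:hypersurface_case}): your mutation--reflection--mutation composite with factors $Q_A$, $Q_B$ and the reflection $v \mapsto v - 2\langle m, v\rangle e$ is the composite $T_{u^+,u^-} \circ \mu^\star_{-w,F'}\circ \mu^\star_{w,F}$ of the paper, with $u^+ = e$, $u^- = -e$. (Your second ``hard part'' is not actually a problem: nothing in that computation uses $|A| = |B|$, since the factor polynomials may have any number of terms.) The genuine gap is the step you flag but do not prove: the claim that any two nef partitions are linked by a chain of two-part elementary swaps. This claim is false at the level of generality at which your argument operates, i.e.\ using only the class-balance condition $\ker D = \im \rho^\star$ and effectivity of the $D_j$. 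An elementary move requires two parts of the \emph{current} partition containing nonempty subsets with equal class sums, and balance plus positivity does not force such a pair to exist. Concretely, consider three parts whose rays carry classes (written additively; realise them as effective classes in a Picard lattice of rank at least two satisfying exactly these relations) $S_1 = \{\delta_1,\delta_2\}$, $S_2 = \{\delta_3,\delta_4\}$, $S_3=\{\delta_5,\delta_6\}$ with ``degrees'' $4, 100 \mid 1, 13 \mid 103, 10$, and target partition $S'_1 = \{\delta_3,\delta_5\}$, $S'_2 = \{\delta_1,\delta_6\}$, $S'_3 = \{\delta_2,\delta_4\}$. Both partitions have part-sums $(104,14,113)$, so both are nef partitions of the same data, and the weight vectors $w_i$ exist. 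But the subset-sum sets of the three parts of the first partition are $\{4,100,104\}$, $\{1,13,14\}$ and $\{10,103,113\}$, which are pairwise disjoint: no elementary move whatsoever is available from the first partition, although it differs from the second. The move-graph is disconnected, so no chain of elementary moves exists, and your plan of ``decomposing into primitive relations'' cannot be carried out by moves passing through honest nef partitions. A proof along your lines would need some additional constraint coming from the toric geometry to exclude such configurations, and the proposal contains no tool for this.

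This obstruction is precisely what the paper's proof is built to circumvent: it never decomposes the passage from $\{S_i\}$ to $\{S'_i\}$ into pairwise swaps. Instead it takes the weight vectors $w_1,\ldots,w_c$ of the given pair of partitions all at once, groups indices into communicating sets via the graph $\cG$ of Definition~\ref{dfn:communicating}, obtains the reflection data $u_i^{\pm}$ by a path induction in $\cG$ (Lemma~\ref{lem:reflections}), and then composes one mutation per weight vector, where the factor $F_i$ is \emph{not} built from $G_i$ but from the already-mutated polynomial $G_i^{(i-1)}$, because the earlier mutations genuinely change the later constraints; controlling this interaction is the content of the hilly-sequence combinatorics (Lemma~\ref{lem:intermediate_description}, Proposition~\ref{pro:combinatorial_description}) and of the palindromic symmetry used to close the diagram in Theorem~\ref{thm:alt_main_result}. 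In short: where your strategy requires interacting constraints to be separable into two-part exchanges --- which the example above shows they need not be --- the paper accepts the interaction and tracks it through corrected mutation factors. To salvage your approach you would have to either prove a connectivity theorem for elementary moves using genuinely toric input, or enlarge your set of elementary moves to multi-part moves, at which point you are re-deriving the paper's construction.
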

Since $\omega_n$ is a holomorphic volume form, we say that birational maps $\varphi$ such that $\varphi^\star\omega_n = \omega_n$ are \emph{volume preserving}. We note that there is a close analogy between Theorem~\ref{thm:main_result} and results of Li \cite{L16} and Clarke \cite{C17}, which establish the birational invariance of Calabi--Yau mirror-duals given by different nef partitions in the (original) sense of Batyrev and Borisov \cite{BB96,B93}. It would be interesting to explore whether our methods can be directly related to those used in the Calabi--Yau setting, noting for example the mutual generalisation of mirror models to Fano and Calabi--Yau toric complete intersections given by Clarke~\cite{C17:Duality}.

As a corollary of Theorem~\ref{thm:main_result}, we can establish the mutation equivalence of various possible Laurent polynomial mirrors to toric Fano complete intersections. In particular, given an \emph{amenable collection subordinate to a nef partition}, as introduced by Doran--Harder~\cite[Definition~$2.2$]{Doran--Harder}, one can form a birational map $(\CC^\star)^{n-c} \dashrightarrow \breve{X}$, and hence define a Laurent polynomial $f$ on the complex torus $(\CC^\star)^{n-c}$ by pulling-back $W$. Throughout this article we say that a pair of Laurent polynomials $f$ and $g$ are \emph{mutation equivalent} if there is a volume preserving birational map $\varphi$ such that $\varphi^\star f =g$.

\begin{cor}[Cf.\ {\cite[Theorem~$2.24$]{Doran--Harder}}]
	\label{cor:amenable_collection}
	Laurent polynomial mirrors obtained from amenable collections subordinate to nef partitions of $(Y; L_1, \ldots, L_c)$ are mutation equivalent.
\end{cor}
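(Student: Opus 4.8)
The plan is to reduce the corollary to Theorem~\ref{thm:main_result}, together with the fact that the Doran--Harder parametrisations are volume preserving, and then to use that mutation equivalence is an equivalence relation. Fix two nef partitions $\{S_i\}$ and $\{S'_i\}$ with associated models $(\breve{X},W)$ and $(\breve{X}',W)$, and let $f$ (respectively $g$) be the Laurent polynomial obtained from an amenable collection subordinate to $\{S_i\}$ (respectively $\{S'_i\}$). By construction \cite{Doran--Harder} there are birational maps $\psi\colon (\CC^\star)^{n-c}\dashrightarrow \breve{X}$ and $\psi'\colon (\CC^\star)^{n-c}\dashrightarrow \breve{X}'$ with $f=\psi^\star W$ and $g=(\psi')^\star W$. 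I would first record the key property of these parametrisations: writing $\Omega$ (respectively $\Omega'$) for the Poincar\'e residue of $\omega_n$ along $\breve{X}$ (respectively $\breve{X}'$), the construction is designed so that $\psi^\star\Omega=\omega_{n-c}$ and $(\psi')^\star\Omega'=\omega_{n-c}$, where $\omega_{n-c}=\bigwedge_{i=1}^{n-c}d\log(x_i)$ denotes the standard form on $(\CC^\star)^{n-c}$.

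Next I would transport the data of Theorem~\ref{thm:main_result} from $\TT_N$ onto the hypersurfaces. Writing $P_i=\sum_{j\in S_i}z^{v_j}$ and $P'_i=\sum_{j\in S'_i}z^{v_j}$, the map $\varphi$ of the theorem satisfies $\varphi^\star P_i=P'_i$, so it carries $\breve{X}'=\{P'_1=1,\dots,P'_c=1\}$ into $\breve{X}=\{P_1=1,\dots,P_c=1\}$ and restricts to a birational map $\varphi|\colon \breve{X}'\dashrightarrow \breve{X}$ with $(\varphi|)^\star W=W$. To see that this restriction is volume preserving I would use the residue: choosing an $(n-c)$-form $\tilde\Omega$ with $\omega_n=\tilde\Omega\wedge dP_1\wedge\cdots\wedge dP_c$, pulling back by $\varphi$ and using $\varphi^\star\omega_n=\omega_n$ and $\varphi^\star dP_i=dP'_i$ gives $\omega_n=\varphi^\star\tilde\Omega\wedge dP'_1\wedge\cdots\wedge dP'_c$. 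Restricting to $\breve{X}'$ then exhibits $\varphi^\star\tilde\Omega$ as a representative for the residue $\Omega'$, whence $(\varphi|)^\star\Omega=\Omega'$.

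With these two ingredients the corollary follows by composition. I would set $\chi:=\psi^{-1}\circ(\varphi|)\circ\psi'\colon (\CC^\star)^{n-c}\dashrightarrow (\CC^\star)^{n-c}$. Then, using $(\psi^{-1})^\star\omega_{n-c}=\Omega$, $(\varphi|)^\star\Omega=\Omega'$ and $(\psi')^\star\Omega'=\omega_{n-c}$, a direct chain shows $\chi^\star\omega_{n-c}=\omega_{n-c}$, so $\chi$ is volume preserving; running the same chain with $W$ in place of the volume form, together with $f=\psi^\star W$ and $g=(\psi')^\star W$, gives $\chi^\star f=g$. Hence $f$ and $g$ are mutation equivalent. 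Since the argument applies verbatim when $\{S_i\}=\{S'_i\}$ (taking $\varphi=\mathrm{id}$), it also recovers the mutation equivalence of Laurent polynomials subordinate to a single nef partition; as mutation equivalence is readily checked to be an equivalence relation, transitivity then yields the full statement.

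The main obstacle I expect is the first ingredient: verifying that the amenable-collection parametrisations are genuinely volume preserving onto the residue form $\Omega$, with consistent normalisations across the two nef partitions. The residue compatibility of $\varphi$ in the second paragraph is a clean Leray-residue computation once $\varphi^\star P_i=P'_i$ is in hand, but pinning down the Doran--Harder construction so that $\psi^\star\Omega=\omega_{n-c}$ holds on the nose, rather than merely up to a nonzero scalar or sign, is the step requiring the most care.
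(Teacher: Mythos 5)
Your proposal is correct and follows essentially the same route as the paper: reduce to Theorem~\ref{thm:main_result}, observe that $\varphi$ identifies the logarithmic forms $\omega_n/\prod(1-G_i)$ and $\omega_n/\prod(1-G'_i)$ and hence their residues, and conjugate by the amenable-collection parametrisations. The step you flag as the main obstacle --- that the parametrisation pulls the residue form back to $\omega_{n-c}$ on the nose --- is precisely the content of the paper's Proposition~\ref{pro:residue}, proved by noting that the mutations $\mu_{u_i,F_i}$ preserve $\omega_n$ and then computing the residue in a lattice basis adapted to the amenable collection.
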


In particular, the smoothness of the toric Fano variety $Y$ plays no role in the proof of Theorem~\ref{thm:main_result}. However, as a special case of Corollary~\ref{cor:amenable_collection}, we conclude \cite[Theorem~$5.1$]{CKP15}. We note that it is stated in \cite{CKP15} that Theorem~$5.1$ is proved in \cite{Prince:PhD}, however this erroneously states that the result follows directly from \cite[Theorem~$2.24$]{Doran--Harder} and we thank N. Ilten for bringing this to our attention. To state this result we first fix a nef partition of $(Y;L_1,\ldots, L_c)$, and choose distinguished elements $s_i \in S_i$ for each $i \in \{1,\ldots,c\}$ and a subset $E \subset S_{c+1}$ such that:
\begin{itemize}
	\item The set $\{D_i : i \in E\}$ forms a basis of $\Pic(Y)$.
	\item We have that $L_i$ is a non-negative linear combination of $\{D_j : j \in E\}$ for each $i \in \{1,\ldots, c\}$.
\end{itemize}

In \cite[\S$5$]{CKP15} an algorithm -- the \emph{Przyjalkowski method} -- for forming a Laurent polynomial $f$ from a nef partition $\{S_i : 1\leq i \leq c+1\}$ together with choices of $s_i \in S_i$ and $E$ is described. We let $X$ denote a complete intersection in $Y$ defined by sections of  $L_1, \ldots, L_c$.

\begin{cor}[Cf.\ {\cite[Theorem~$5.1$]{CKP15}}]
	\label{cor:CKP15}
	Let $f$ and $g$ be Laurent polynomial mirrors to $X$, formed by applying the Przyjalkowski method to a pair of nef partitions $\{S_i : 1 \leq i \leq c+1\}$, together with choices of elements $s_i \in S_i$ and subsets $E \subset S_{c+1}$ as above; then the Laurent polynomials $f$ and $g$ are mutation equivalent.
\end{cor}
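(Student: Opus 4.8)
The plan is to deduce Corollary~\ref{cor:CKP15} from Corollary~\ref{cor:amenable_collection}, which in turn follows from Theorem~\ref{thm:main_result}. The essential observation is that the Przyjalkowski method is a concrete instance of the construction of a Laurent polynomial from an amenable collection subordinate to a nef partition in the sense of Doran--Harder~\cite{Doran--Harder}. Thus the main content of the proof is to unwind the definitions and verify that the distinguished data $(\{s_i\}, E)$ used by the Przyjalkowski method indeed assembles into an amenable collection, after which the result is immediate from Corollary~\ref{cor:amenable_collection}.

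First I would recall the Przyjalkowski method of \cite[\S$5$]{CKP15} explicitly: given a nef partition $\{S_i\}$, the choices $s_i \in S_i$ and $E \subset S_{c+1}$, one uses the $c$ defining equations $\sum_{j \in S_i} z^{v_j} = 1$ cutting out $\breve{X}$ to solve for the $c$ monomials $z^{v_{s_i}}$ in terms of the remaining ones. The two bulleted conditions on $E$---that $\{D_i : i \in E\}$ is a basis of $\Pic(Y)$ and that each $L_i$ is a non-negative combination of this basis---are precisely what guarantee that this elimination produces a genuine Laurent polynomial $f$ (rather than one with negative exponents or denominators) on $(\CC^\star)^{n-c}$, realised as the pullback of $W$ along a birational parametrisation $(\CC^\star)^{n-c} \dashrightarrow \breve{X}$. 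I would then match this parametrisation term-by-term with the birational map $(\CC^\star)^{n-c} \dashrightarrow \breve{X}$ associated to an amenable collection, checking that the distinguished elements $s_i$ and the basis condition on $E$ translate into the defining properties of an amenable collection subordinate to the nef partition.

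Having established this dictionary, I would apply Corollary~\ref{cor:amenable_collection} directly: the Laurent polynomials $f$ and $g$ arise from amenable collections subordinate to the two respective nef partitions, and hence are mutation equivalent. This step is purely formal once the identification of the previous paragraph is in place.

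The main obstacle I anticipate is the bookkeeping required to confirm that the Przyjalkowski data satisfies the amenability conditions in full generality, and in particular to verify that the resulting map is volume preserving with respect to $\omega_n$ under the coordinate change on $(\CC^\star)^{n-c}$ induced by the basis $E$. Since the elimination is monomial, the relevant change of coordinates is given by an integral matrix, and one must check that its determinant is $\pm 1$; this is where the hypothesis that $\{D_i : i \in E\}$ forms a \emph{basis} (as opposed to merely a spanning set) of $\Pic(Y)$ is used, ensuring the parametrisation is a volume preserving monomial isomorphism onto its image. Once this is confirmed, the compatibility of $\omega_n$ and $W$ required to invoke the earlier results is automatic, and the corollary follows.
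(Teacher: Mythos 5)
Your proposal matches the paper's treatment: the paper disposes of this corollary with the single remark in the introduction that it is a special case of Corollary~\ref{cor:amenable_collection}, which is precisely the reduction you describe (the Przyjalkowski data $(\{s_i\},E)$ determines an amenable collection subordinate to the given nef partition, after which mutation equivalence of $f$ and $g$ is immediate). Your closing observations on the role of the basis condition on $E$ in making the monomial parametrisation unimodular, hence volume preserving, supply detail that the paper leaves implicit but do not change the route.
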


We first describe the proof of Theorem~\ref{thm:main_result} in the hypersurface case $(c=1)$.

---------------------------------------

\subsection*{Acknowledgements}

I thank Tom Coates, Alexander Kasprzyk, and Nathan Ilten for our conversations about mutations and toric Landau--Ginzburg models. TP was supported by a Fellowship by Examination at Magdalen College, Oxford.
\section{The hypersurface case}
\label{sec:hypersurfaces}

Fix a toric Fano manifold $Y$, a line bundle $L$, and a pair of distinct sets $S$ and $S' \subset \{1,\ldots, R\}$ -- using the notation introduced in \S\ref{sec:introduction} -- such that
\[
L = \sum_{i \in S}{D_i} = \sum_{j \in S'}{D_j}.
\]
Given a ray generator $v_i$ of $\Sigma$, we let $X_i$ denote the variable in the total homogeneous co-ordinate ring of $Y$ corresponding to $e_i \in \ZZ^R$; recalling that $v_i$ is equal to $\rho(e_i)$. Crucially, we have that both $\prod_{i \in S}{X_i}$ and $\prod_{j \in S'}{X_j}$ are elements of $\Gamma(Y,L)$. In other words, the monomial
\[
\frac{\prod_{i \in S}{X_i}}{\prod_{j \in S'}{X_j}}
\]
is a rational function on $Y$, regular on $\TT_N$. Since $M = \Hom(N,\ZZ)$ is the character lattice of $\TT_N$, there is a linear function $w \colon N \to \ZZ$ such that
\begin{equation}
\label{eq:weight_vector}
w(v_i) = 
\begin{cases}
1 & \text{if $i \in S\setminus S'$}\\
-1 & \text{if $i \in S' \setminus S$} \\
0 & \text{otherwise.}
\end{cases}
\end{equation}

\begin{rem}
	\label{rem:weight_vectors}
	We note that the construction of $w$ is not restricted to the hypersurface case: given $(Y;L_1,\ldots, L_c)$, and a pair of nef partitions $\{S_i : 1 \leq i \leq c+1\}$ and $\{S'_i : 1 \leq i\leq c+1\}$, there is a unique $w_i$ for each $i \in \{1,\ldots c\}$ such that $w_i(v_j) = 1$ if $j \in S_i\setminus S'_i$, $w_i(v_j) = -1$ if $j \in S'_i \setminus S_i$, and such that $w_i$ evaluates to zero on all other ray generators.
\end{rem}

The second geometric ingredient which will play a key role in the proof of Theorem~\ref{thm:main_result} are certain \emph{integral reflections}.

\begin{dfn}
	Given a linear function $w \colon N \to \ZZ$, fix a pair of vectors $u^{\pm} \in w^{-1}(\pm1)$ respectively. We let $T_{u^+,u^-}$ denote the unique $\GL_n(\ZZ)$ transformation which fixes $w^{-1}(0)$ and sends $u^\pm$ to $u^\mp$. We refer to such lattice isomorphisms as integral reflections.
\end{dfn}

The isomorphism $T_{u^+,u^-}\colon N \to N$ defines an isomorphism $\CC[N] \to \CC[N]$, which we also denote by $T_{u^+,u^-}$. We next recall the notion of \emph{(algebraic) mutation}, as described by Akhtar--Coates--Galkin--Kasprzyk~\cite{ACGK}. These maps were introduced by Galkin--Usnich~\cite{Galkin--Usnich} as \emph{mutations of potential}.

\begin{dfn}
	\label{dfn:mutation}
	Fix a lattice vector $w \in M \cong \ZZ^n$, and a Laurent polynomial $F \in \CC[w^\bot]$. We let $\mu_{w,F} \colon \TT_N \dashrightarrow \TT_N$ be the birational map defined by setting
	\[
	\mu^\star_{w,F}(z^v) = z^vF^{-\langle w,v\rangle}
	\]
	for each $v \in N$. Given a Laurent polynomial $f \in \CC[N]$ such that $\mu^\star_{w,F}f \in \CC[N]$ we say that $\mu_{w,F}$ defines a mutation of $f$.
\end{dfn}

We recall that, for any $w$ and $F$ as in Definition~\ref{dfn:mutation}, $\mu^\star_{w,F}(\omega_n) = \omega_n$; that is, algebraic mutations are volume preserving birational maps, or -- in the terminology used by Katzarkov--Przyjalkowski in~\cite{Katzarkov--Przyjalkowski} -- \emph{symplectomorphisms of cluster type}. Following \cite{ACGK}, we refer to $w$ as the \emph{weight vector} and $F$ the \emph{factor} of the mutation $\mu_{w,F}$.

\begin{pro}
	\label{pro:hypersurface_case}
	Theorem~\ref{thm:main_result} holds in the case $c=1$, setting $L_1 = L$.
\end{pro}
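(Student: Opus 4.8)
The plan is to build $\varphi$ explicitly as a mutation, an integral reflection, and a second mutation, all organised around the weight vector $w$ of \eqref{eq:weight_vector} (the $c=1$ case of Remark~\ref{rem:weight_vectors}). Write $A := S\setminus S'$ and $B := S'\setminus S$; by \eqref{eq:weight_vector} these index exactly the ray generators on which $w$ takes the values $+1$ and $-1$, while every other $v_j$, in particular those indexed by $S\cap S'$, lies in $w^\bot$. Since $\sum_{j\in S}D_j = \sum_{j\in S'}D_j$ forces $\sum_{j\in A}D_j = \sum_{j\in B}D_j$, and an effective divisor trivial in $\Pic(Y)$ must vanish on the complete variety $Y$, both $A$ and $B$ are nonempty; hence $w$ is primitive and I may take $u^+ := v_{a_0}$, $u^- := v_{b_0}$ for chosen $a_0\in A$, $b_0\in B$ as the data of an integral reflection $T_{u^+,u^-}$.

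The next step is to split the two constraints by $w$-degree. With $C := \sum_{j\in S\cap S'}z^{v_j}\in\CC[w^\bot]$ the common part, one has $\sum_{j\in S}z^{v_j} = C + z^{u^+}P$ and $\sum_{j\in S'}z^{v_j} = C + z^{u^-}Q$, where $P := \sum_{j\in A}z^{v_j-u^+}$ and $Q := \sum_{j\in B}z^{v_j-u^-}$ both lie in $\CC[w^\bot]$. The key observation is that both an integral reflection and a mutation of weight $w$ fix $\CC[w^\bot]$ pointwise, so $C$ is preserved automatically. Writing $W$ in its partition-independent form $W = \sum_{j} z^{v_j}$ over all rays and grouping by $w$-degree gives $W = C' + z^{u^+}P + z^{u^-}Q$ with $C'\in\CC[w^\bot]$; thus the whole problem reduces to producing a volume-preserving birational map that fixes $\CC[w^\bot]$ and interchanges the two ``charged'' pieces $z^{u^+}P$ and $z^{u^-}Q$, since this simultaneously matches the two constraints and preserves $W$.

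I would then set $\varphi^\star := \mu_{w,Q}^\star\circ T_{u^+,u^-}^\star\circ\mu_{w,P}^\star$ and verify the interchange by a direct monomial computation: for $j\in A$ the inner mutation contributes the factor $P^{-1}$, the reflection sends $v_j = u^+ + (v_j-u^+)$ to $u^- + (v_j-u^+)$ while fixing $P$, and the outer mutation contributes $Q$, so that $\varphi^\star\bigl(\sum_{j\in A}z^{v_j}\bigr) = P^{-1}Q\,z^{u^-}P = z^{u^-}Q = \sum_{j\in B}z^{v_j}$, with the symmetric computation giving $\varphi^\star\bigl(\sum_{j\in B}z^{v_j}\bigr) = \sum_{j\in A}z^{v_j}$. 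Together with the pointwise fixing of $\CC[w^\bot]$ this yields $\varphi^\star(\sum_{j\in S}z^{v_j}) = \sum_{j\in S'}z^{v_j}$ and $\varphi^\star W = W$. The conceptual heart is that no mutation can change the $w$-degree of a monomial, so interchanging the two charged layers genuinely requires the reflection, while the two flanking mutations, with factors precisely the reflection-reduced layer sums $P$ and $Q$, are exactly what reconciles $P$ with $Q$.

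The point demanding the most care is volume preservation. Each mutation satisfies $\mu_{w,F}^\star\omega_n = \omega_n$, but $T_{u^+,u^-}$ lies in $\GL_n(\ZZ)$ with determinant $-1$ (it acts as $-1$ on $N/w^\bot$ and trivially on $w^\bot$), so $T_{u^+,u^-}^\star\omega_n = -\omega_n$ and hence $\varphi^\star\omega_n = -\omega_n$. The one genuinely delicate step is therefore to reconcile this sign with the stated equality: I would check that the sign is immaterial for the notion in play, the reflection preserving $\omega_n$ up to sign being the condition relevant to the cluster-type symplectomorphisms of \cite{Katzarkov--Przyjalkowski}, or otherwise adjust the conventions so that strict invariance holds; everything else is the bookkeeping indicated above.
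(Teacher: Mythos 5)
Your construction is the paper's own proof, not merely the same approach: the paper sets $\varphi^\star = T_{u^+,u^-}\circ\mu^\star_{-w,F'}\circ\mu^\star_{w,F}$ with $F = \frac{1}{z^{u^+}}\sum_{i\in S\setminus S'}z^{v_i}$ and $F' = \frac{1}{z^{u^-}}\sum_{j\in S'\setminus S}z^{v_j}$, which are exactly your $P$ and $Q$, and on any monomial $z^v$ with $\langle w,v\rangle = k$ both compositions give $z^{v}\left(z^{u^- - u^+}\right)^{k}Q^{k}P^{-k}$, so your rearrangement (reflection in the middle, both mutations with weight $w$) defines literally the same birational map, and your verification of the interchange of the constraint polynomials and of $\varphi^\star W = W$ is correct.

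The sign issue you raise in your final paragraph is genuine, but it is a defect of the paper rather than of your argument. Since $T_{u^+,u^-}$ fixes the rank-$(n-1)$ sublattice $w^\bot$ pointwise and sends $u^+ - u^-$ (which does not lie in $w^\bot$) to its negative, indeed $\det T_{u^+,u^-} = -1$, hence $T_{u^+,u^-}^\star\omega_n = -\omega_n$ and $\varphi^\star\omega_n = -\omega_n$. The paper's composition has exactly the same property; the proof simply asserts that all conditions of Theorem~\ref{thm:main_result} hold without checking this one. Moreover, no cleverer choice of $\varphi$ can avoid the sign in general: take $Y = \PP^1$ with $S = \{v_1\}$ and $S' = \{v_2\}$; then the requirement $\varphi^\star(z^{v_1}) = z^{v_2}$ forces $\varphi(z) = z^{-1}$, whence $\varphi^\star\omega_1 = -\omega_1$, so a map satisfying the theorem with strict equality of volume forms does not exist. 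The correct resolution is the one you suggest: ``volume preserving'' must be read as $\varphi^\star\omega_n = \pm\omega_n$ (equivalently, one permits composition with arbitrary $\GL_n(\ZZ)$ transformations, as is standard for mutation equivalence in the literature the paper cites, and as suffices for all the corollaries). With that convention your proof is complete; with the paper's literal convention, neither your proof nor the paper's establishes the statement, and the statement itself fails in the example above.
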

\begin{proof}
	We first let $w \in M$ be the vector given in \eqref{eq:weight_vector}, and choose elements $u^\pm$ in $w^{-1}(\pm 1)$. Consider the mutation $\mu_{w,F}$ of $W = z^{v_1} + \cdots + z^{v_R}$ where $F$ is given by
	\[
	F := \frac{1}{z^{u^+}}\Big(\sum_{i \in S\setminus S'}{z^{v_i}}\Big).
	\]
	We compute that
	\[
	W_1 := \mu^\star_{w,F}W = z^{u^+} + \frac{1}{z^{u^+}}\sum_{i \in S\setminus S'}{z^{v_i}}\sum_{j \in S'\setminus S}{z^{v_j}} + \sum_{i \notin S \ominus S'}{z^{v_i}}.
	\]
	Next consider the mutation $\mu_{-w,F'}$ of $W_1$ where
	\[
	F' := \frac{1}{z^{u^-}}\Big(\sum_{i \in S'\setminus S}{z^{v_i}}\Big).
	\]
	Setting $W_2 := \mu^\star_{-w,F'}W_1$, we compute that
	\[
	W_2 = \frac{z^{u^+}}{z^{u^-}}\sum_{j \in S'\setminus S}{z^{v_j}} + \frac{z^{u^-}}{z^{u^+}}\sum_{i \in S\setminus S'}{z^{v_i}} + \sum_{i \notin S \ominus S'}{z^{v_i}}.
	\]
	Moreover, we compute that
	\begin{align*}
		\mu_{-w,F'}^\star\circ \mu^\star_{w,F}\Big(\sum_{i \in S\setminus S'}{z^{v_i}}\Big) &= \mu_{-w,F'}^\star z^{u^+}\\
		&= \frac{z^{u^+}}{z^{u^-}}\Big(\sum_{i \in S'\setminus S}{z^{v_i}}\Big).
	\end{align*}
	Finally, since $w(v_i) = 1$ for any $i \in S \setminus S'$, we have that $T_{u^+,u^-}(v_i) = v_i - u^+ + u^-$; that is, $T_{u^+,u^-}(z^{v_i}) = \frac{z^{u^-}}{z^{u^+}}z^{v_i}$. Similarly, for any $i \in S' \setminus S$, $T_{u^+,u^-}(v_i) = v_i - u^- + u^+$. Thus the composition $\varphi := T_{u^+,u^-} \circ \mu^\star_{-w,F'} \circ \mu^\star_{w,F}$ satisfies the conditions required by the statement of Theorem~\ref{thm:main_result}. 
\end{proof}

In order to exploit the symmetry of the situation, the proof of the general case of Theorem~\ref{thm:main_result} is a generalisation of a rephrased version of the above proof. Indeed, let $G := \sum_{i \in S\setminus S'}{z^{v_i}}$ and let $G' = \sum_{i \in S'\setminus S}{z^{v_i}}$. Applying $\mu^\star_{w,F}$ to $W$ and $G$ we obtain $W_1$ and $z^{u^+}$ respectively. However, applying $\mu^\star_{-w,F'}$ to $W$ and $G'$ we obtain
\[
\mu^\star_{-w,F'}W = z^{u^-} + \frac{1}{z^{u^-}}\sum_{i \in S\setminus S'}{z^{v_i}}\sum_{j \in S'\setminus S}{z^{v_j}} + \sum_{i \notin S \ominus S'}{z^{v_i}}
\] and $\mu^\star_{-w,F'}G' = z^{u^-}$ respectively. We note that $T_{u^+,u^-}$ exchanges $z^{u^+}$ and $z^{u^-}$, while every term in either 
\[
\Big(\sum_{i \in S\setminus S'}{z^{v_i}}\sum_{j \in S'\setminus S}{z^{v_j}}\Big) \text{, or } \sum_{i \notin S \ominus S'}{z^{v_i}}
\]
is fixed by $T_{u^+,u^-}$. Hence we can describe $\varphi$ via the following diagram:
\begin{equation}
\label{eq:hypersurface_diagram}
\xymatrix{
 & \TT_N \ar^{T_{u^+,u^-}}[rr] \ar@{-->}_{\mu_{w,F}}[dl] & & \TT_N \ar@{-->}^{\mu_{-w,F'}}[dr] & \\
\TT_N  & & & & \TT_N \ar@{-->}_{\varphi}[llll]
}
\end{equation}

\begin{rem}
	\label{rem:degenerate_cases}
	We observe that the proof of Proposition~\ref{pro:hypersurface_case} breaks down if $S=S'$, and this case -- despite its triviality -- has to be considered separately. In particular, the vector $w$ becomes equal to zero and there are no possible choices of vectors $u^\pm$. The degeneration of the argument at this case generalises in higher codimension, see Definition~\ref{dfn:communicating}.
\end{rem}
\section{Complete intersections}
\label{sec:complete_intersections}

Turning to the general case, we fix nef partitions $\{S_1,\ldots, S_{c+1}\}$ and $\{S'_1,\ldots, S'_{c+1}\}$ of $(Y; L_1,\ldots, L_c)$. Following Remark~\ref{rem:weight_vectors}, we can fix (weight) vectors $w_1,\ldots, w_c$ such that
\begin{equation}
\label{eq:weight_vector_2}
w_i(v_j) = 
\begin{cases}
1 & \text{if $j \in S_i\setminus S'_i$}\\
-1 & \text{if $j \in S'_i \setminus S_i$} \\
0 & \text{otherwise,}
\end{cases}
\end{equation}
for all $i \in \{1,\ldots, c\}$. We also fix integral reflections, determined by elements $u^\pm_i$, analogous to the vectors $u^\pm$ which appear in the proof of Proposition~\ref{pro:hypersurface_case}. Unfortunately, as indicated in Remark~\ref{rem:degenerate_cases}, the existence of such vectors is not guaranteed. To describe how the special case described in Remark~\ref{rem:degenerate_cases} generalises, we introduce the following graph.

\begin{dfn}
	\label{dfn:communicating}
	Given a pair of nef partitions $\{S_1,\ldots, S_{c+1}\}$ and $\{S'_1,\ldots, S'_{c+1}\}$ of $(Y; L_1,\ldots, L_c)$ we let $\cG$ denote the graph with vertex set $\{1,\ldots, c+1 \}$. We include an edge between vertices $i$ and $j$ if and only if either $S_i \cap S'_j \neq \varnothing$ or $S'_i \cap S_j \neq \varnothing$. We refer to the connected components of $\cG$ as \emph{communicating sets}.
\end{dfn}

Fixing a communicating set $C = \{i_1, \ldots, i_{l+1}\}$, for some $l \in \{0,\ldots, c\}$, we fix integral reflections associated to elements in each communicating set.

\begin{lem}
	\label{lem:reflections}
	For each $j \in C \setminus \{i_{l+1}\}$ there is a pair of vectors $u^\pm_j \in N$ such that
	\[
	w_a(u^\pm_j) = \begin{cases}
	\pm 1 & \text{ if $a=j$} \\
	0 & \text{if $a \notin \{j,i_{l+1}\}$,}
	\end{cases}
	\]
	for all $a \in \{1,\ldots c\}$.
\end{lem}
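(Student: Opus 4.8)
The plan is to realise each $u^\pm_j$ explicitly as an integral combination of ray generators; this dispatches at a stroke the only genuine difficulty, namely that $u^\pm_j$ is required to lie in the lattice $N$ and not merely in $N_\QQ$. First I would record how the weight vectors evaluate on a single ray. For a ray $v_k$ write $\sigma(k)$ and $\sigma'(k)$ for the unique indices in $\{1,\ldots,c+1\}$ with $v_k \in S_{\sigma(k)}$ and $v_k \in S'_{\sigma'(k)}$. Setting $\widehat{w}_{c+1} := -\sum_{a=1}^c w_a \in M$ and $\widehat{w}_a := w_a$ for $a \le c$, the fact that $\{S_a\}$ and $\{S'_a\}$ are both partitions gives, directly from \eqref{eq:weight_vector_2}, the formula
\[
\widehat{w}_a(v_k) = \delta_{a,\sigma(k)} - \delta_{a,\sigma'(k)}, \qquad 1 \le a \le c+1.
\]
Assembling these into $\widehat{W}(u) := (\widehat{w}_1(u),\ldots,\widehat{w}_{c+1}(u)) \in \ZZ^{c+1}$ yields the clean identity $\widehat{W}(v_k) = e_{\sigma(k)} - e_{\sigma'(k)}$, where $e_1,\ldots,e_{c+1}$ is the standard basis of $\ZZ^{c+1}$. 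In particular a ray witnessing an edge $\{i,j\}$ of $\cG$, say $v_k \in S_i \cap S'_j$, satisfies $\widehat{W}(v_k) = e_i - e_j$, while the opposite orientation $v_k \in S'_i \cap S_j$ gives $e_j - e_i$.

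Next I would use the connectedness of $C$. Choosing the distinguished element $i_{l+1}$ to be $c+1$ whenever $c+1 \in C$, and fixing a path $j = a_0, a_1, \ldots, a_m = i_{l+1}$ in $\cG$ joining a given $j \in C \setminus \{i_{l+1}\}$ to $i_{l+1}$, each edge $\{a_t,a_{t+1}\}$ is witnessed by a ray $v_{k_t}$; selecting a sign $\varepsilon_t \in \{\pm 1\}$ according to its orientation arranges $\widehat{W}(\varepsilon_t v_{k_t}) = e_{a_t} - e_{a_{t+1}}$. Setting
\[
u^+_j := \sum_{t=0}^{m-1} \varepsilon_t v_{k_t} \in N,
\]
the sum telescopes to $\widehat{W}(u^+_j) = e_{a_0} - e_{a_m} = e_j - e_{i_{l+1}}$. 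Reading off the first $c$ coordinates gives $w_j(u^+_j) = 1$ and $w_a(u^+_j) = 0$ for every $a \in \{1,\ldots,c\}\setminus\{j,i_{l+1}\}$, the coordinate $w_{i_{l+1}}(u^+_j)$ (present only when $i_{l+1} \le c$) being left unconstrained, which is exactly what the statement demands. Finally I set $u^-_j := -u^+_j$, which negates the $j$-th coordinate while preserving every vanishing condition.

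The point I regard as the crux is integrality: solving the linear system $w_a(u) = t_a$ abstractly only produces a rational solution, and the relations among the $w_a$ do not obviously guarantee an integral one. The construction above avoids this completely, since $u^+_j$ is a $\ZZ$-combination of the lattice vectors $v_{k_t}$, and it is precisely the connectedness of the communicating set $C$ that supplies the telescoping path. It is also this structure that forces one index per component to be omitted: the global relation $\sum_{a=1}^{c+1}\widehat{w}_a = 0$ produces, for each communicating set, a linear dependence among $w_1,\ldots,w_c$ — any coefficient vector that is constant on each connected component of $\cG$ and vanishes on the component containing $c+1$ yields a relation $\sum_{a=1}^c \lambda_a w_a = 0$ — so the weight $w_{i_{l+1}}$ cannot be prescribed independently and is left free to absorb it.
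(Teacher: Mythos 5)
Your construction is correct and is essentially the paper's own proof: the paper builds $u^+_j$ by induction on the length of a path in $\cG$ from $j$ to $i_{l+1}$, adding a signed ray generator witnessing each edge, which unrolls to exactly your telescoping sum $\sum_t \varepsilon_t v_{k_t}$. Your packaging via $\widehat{w}_{c+1}:=-\sum_a w_a$ and the identity $\widehat{W}(v_k)=e_{\sigma(k)}-e_{\sigma'(k)}$ is a tidy way to present the same verification, and your closing remark on why one index per communicating set must be left unconstrained is a correct (if tangential) observation.
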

\begin{proof}
	We proceed by induction on the length of the shortest path in $\cG$ from $j$ to $i_{l+1}$. If there is a path of length one then there is an element $v \in S_j \cap S'_{i_{l+1}}$ or $S'_j \cap S_{i_{l+1}}$. We take $u^+_j$ to be equal to $v$ or $-v$ in the first and second of these cases respectively and make a similar assignment for $u^-_j$. Next we fix a path from $j$ to $i_{l+1}$ and let $p$ denote the second vertex in this path (after $j$). By the definition of $\cG$, there is an element $v \in S_j \cap S'_p$ or $S'_j \cap S_p$ and by the induction hypothesis there are elements $u^\pm_p$ satisfying the conditions of the lemma. Thus we may take $u^+_j$ to be either $v + u^+_p$ or $u^+_p - v$.
\end{proof}

\begin{eg}
	We note that in the hypersurface case ($c=1$) we have two possibilities; either $\cG$ is connected and Lemma~\ref{lem:reflections} ensures the existence of a pair of vectors $u^\pm_1$, or $\cG$ consists of a disjoint pair of points. In the latter case the nef partitions coincide and Lemma~\ref{lem:reflections} becomes trivial.
\end{eg}

Writing
\[
W = \sum_{i \in S_1}{z^{v_i}} + \cdots + \sum_{i \in S_{c+1}}{z^{v_i}},
\]
we partition these terms into blocks corresponding to communicating sets. To simplify notation in what follows we fix a communicating set $C = \{i_1,\ldots, i_{l+1}\}$, define
\[
W_C = \sum_{j \in C}\sum_{i \in S_j}{z^{v_i}}.
\]
We then note that Theorem~\ref{thm:main_result} follows immediately from the following restricted version.
\begin{thm}
	\label{thm:alt_main_result}
	For each communicating set $C$, there is a birational map $\varphi_C \colon \TT_N \dashrightarrow \TT_N$ such that $\varphi_C^\star(W_C) = W_C$, $\varphi_C^\star\omega_n = \omega_n$,
	\[
	\varphi_C^\star\Big(\sum_{j \in S_i}{z^{v_j}}\Big) = \sum_{j \in S'_i}{z^{v_j}}
	\]
	for all $i \in C \setminus \{i_{l+1}\}$, and $\varphi_C^\star(z^{v_j}) = z^{v_j}$ for all $j \notin \bigcup_{i \in C}S_j$.
\end{thm}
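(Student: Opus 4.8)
The plan is to generalise the symmetric reformulation of the proof of Proposition~\ref{pro:hypersurface_case} encoded in diagram~\eqref{eq:hypersurface_diagram}. For $i\in C$ write $G_i:=\sum_{j\in S_i\setminus S'_i}z^{v_j}$ and $G'_i:=\sum_{j\in S'_i\setminus S_i}z^{v_j}$, and for $i\in C\setminus\{i_{l+1}\}$ set $F_i:=z^{-u^+_i}G_i$ and $F'_i:=z^{-u^-_i}G'_i$. Since $w_i$ takes the value $1$ on $S_i\setminus S'_i$ and $-1$ on $S'_i\setminus S_i$, while $w_i(u^+_i)=1$ and $w_i(u^-_i)=-1$ by Lemma~\ref{lem:reflections}, every monomial of $F_i$ and of $F'_i$ lies in $w_i^\perp$; thus $\mu_{w_i,F_i}$ and $\mu_{-w_i,F'_i}$ are honest mutations. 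I would form the left composite $\Phi$ of the $\mu_{w_i,F_i}$, the right composite $\Phi'$ of the $\mu_{-w_i,F'_i}$, and the combined reflection $T:=\prod_{i\in C\setminus\{i_{l+1}\}}T_{u^+_i,u^-_i}$, and take $\varphi_C$ to be the birational map with $\varphi_C^\star=(\Phi'^\star)^{-1}\circ T^\star\circ\Phi^\star$; for $\lvert C\rvert=2$ this reproduces diagram~\eqref{eq:hypersurface_diagram}. The reflections commute and $T$ is an involution exchanging each $u^+_i$ with $u^-_i$: for $i\neq k$, Lemma~\ref{lem:reflections} gives $w_i(u^\pm_k)=0$, so $u^\pm_k$ lies in the hyperplane $w_i^{-1}(0)$ fixed by $T_{u^+_i,u^-_i}$. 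As each mutation preserves $\omega_n$ and each $T_{u^+_i,u^-_i}\in\GL_n(\ZZ)$, the composite $\varphi_C$ is automatically volume preserving; and since $w_i(v_j)=0$ whenever $j\notin\bigcup_{i\in C}S_i$, every such $z^{v_j}$ is fixed by $\Phi$, $\Phi'$ and $T$, yielding the final assertion of the theorem.

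Next I would reduce the two substantive identities to statements about the $G_i$. The rays of $\bigcup_{i\in C}S_i$ and of $\bigcup_{i\in C}S'_i$ coincide (a ray of $S_i$ lying in $S'_j$ creates an edge of $\cG$, forcing $j\in C$), so writing $H_i:=\sum_{j\in S_i\cap S'_i}z^{v_j}$ one has $W_C=\sum_{i\in C}(H_i+G_i)=\sum_{i\in C}(H_i+G'_i)$. Every $w_k$ vanishes on $S_i\cap S'_i$, so the $H_i$ are inert throughout. The construction is designed so that $\Phi^\star G_i=z^{u^+_i}$ and $\Phi'^\star G'_i=z^{u^-_i}$ for $i\in C\setminus\{i_{l+1}\}$; granting these, the block identity $\varphi_C^\star(\sum_{j\in S_i}z^{v_j})=\sum_{j\in S'_i}z^{v_j}$ follows from $T^\star z^{u^+_i}=z^{u^-_i}$, since $\varphi_C^\star G_i=(\Phi'^\star)^{-1}T^\star z^{u^+_i}=(\Phi'^\star)^{-1}z^{u^-_i}=G'_i$. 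The preservation $\varphi_C^\star W_C=W_C$ is equivalent to the symmetry $\Phi^\star W_C=T^\star\Phi'^\star W_C$, which by the conservation law $\sum_{i\in C}G_i=\sum_{i\in C}G'_i$ reduces to a single identity on the block $i_{l+1}$; the identity for that block then follows by linearity from the others.

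The main obstacle, which I expect to be the crux of the whole argument, is that the mutations $\mu_{w_i,F_i}$ for distinct $i$ do not commute, and that the identities $\Phi^\star G_i=z^{u^+_i}$ cannot be achieved by the naive fixed-factor composite. A crossing ray in $S_a\cap S'_b$ with $a\neq b$ is weighted $+1$ by $w_a$ and $-1$ by $w_b$, so $\mu_{w_b,F_b}$ disturbs $G_a$; and when the flow of crossing rays on $C$ contains a directed cycle, no ordering of the $\mu_{w_i,F_i}$ reduces all the $G_i$ to monomials at once. To control this I would exploit the tree structure underlying Lemma~\ref{lem:reflections}: fixing a spanning tree of $\cG$ rooted at $i_{l+1}$ and processing blocks in order of decreasing distance to the root, I would argue by induction on a complexity measure of $C$ (for instance the number $\sum_{i\in C}\lvert S_i\setminus S'_i\rvert$ of crossing rays), applying at each stage a single mutation at a leaf $j$ — which sends $G_j$ to the monomial $z^{u^+_j}$ and shears the remaining blocks — with the factor of each later mutation taken to be the image, under the steps already performed, of the relevant $G_i$. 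Since $u^+_k\in w_j^{-1}(0)$ for every $k\neq j$, the monomial created at a leaf is inert under all subsequent steps, which is what permits the induction to close. Establishing that the sheared data is again an instance of the theorem for a strictly smaller configuration, and that the two resulting monomial configurations are matched by $T$, is where the essential work lies.
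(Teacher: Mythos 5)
Your overall architecture is the paper's: the same diagram with a left composite of mutations $\mu_{w_i,F_i}$, a right composite of mutations $\mu_{-w_i,F'_i}$, and the product of commuting reflections $T=T_{u_l^+,u_l^-}\circ\cdots\circ T_{u_1^+,u_1^-}$ in the middle, with volume preservation and the inertness of rays outside $\bigcup_{i\in C}S_i$ handled exactly as you say. But there is a genuine gap at the step you yourself flag as the crux, and your second paragraph rests on a claim that is false. The identity $\Phi^\star G_i=z^{u_i^+}$ cannot hold whenever the crossing pattern contains a directed cycle --- already for $C=\{1,2,3\}$ with both $S_1\cap S'_2$ and $S_2\cap S'_1$ nonempty, the term $z^{-u_1^+}\bigl(\sum_{k\in S_1\cap S'_2}z^{v_k}\bigr)\bigl(\sum_{k\in S_2\cap S'_1}z^{v_k}\bigr)$ appearing in $\mu^\star_{w_1,F_1}G_2$ has $w_2$-degree $0$, so no choice of factor for the second mutation (and no reordering, and no spanning-tree scheme) can absorb it into a monomial. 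Consequently the block identity cannot be deduced from $T^\star z^{u_i^+}=z^{u_i^-}$ alone, and the reduction of $\varphi_C^\star W_C=W_C$ to ``a single identity on the block $i_{l+1}$'' collapses, since $\Phi^\star G_{i_{l+1}}$ is likewise not controlled by your argument.

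What the paper does instead is precisely to live with these residual terms and match them across the two sides. The factors are defined inductively, $F_i=z^{-u_i^+}\tilde G_i^{(i-1)}$, where only the $w_i$-degree-one part of the already-mutated block is used; Lemma~\ref{lem:intermediate_description} and Proposition~\ref{pro:combinatorial_description} then give an exact combinatorial description of $G_i^{(l)}$ as $z^{u_i^+}$ \emph{plus} a sum of products indexed by hilly return sequences $(a_1,\ldots,a_p)\in\cM(i,i-1)$ with $a_p=a_1=i$. The proof of Theorem~\ref{thm:alt_main_result} is completed by observing that reversing such a sequence gives the corresponding term of ${G'}^{(l)}_i$ (the palindromic property of hilly sequences), so that $T$ identifies $G_i^{(l)}$ with ${G'}^{(l)}_i$ term by term. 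This bookkeeping --- the sequence notation, the hilly condition, the multiplicity-one statement, and the palindrome symmetry --- is the substance of the proof, and it is exactly the part your proposal defers rather than supplies. To repair your argument you would need to replace the target $\Phi^\star G_i=z^{u_i^+}$ by an explicit description of $\Phi^\star G_i$ and prove the $S\leftrightarrow S'$ symmetry of its non-monomial part.
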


\begin{proof}[Proof of Theorem~\ref{thm:main_result}]
To prove Theorem~\ref{thm:main_result} from this result we note that the composition of the birational maps $\varphi_C$ satisfies all the conditions of Theorem~\ref{thm:main_result}. Note in particular that if $\varphi_C^\star(W_C) = W_C$ and
\[
\varphi_C^\star\Big(\sum_{j \in S_i}{z^{v_j}}\Big) = \sum_{j \in S'_i}{z^{v_j}}
\] 
for all $i \in C \setminus \{i_{l+1}\}$ it automatically follows that 
\[
\varphi_C^\star\Big(\sum_{j \in S_{i_{l+1}}}{z^{v_j}}\Big) = \sum_{j \in S'_{i_{l+1}}}{z^{v_j}}.
\]
\end{proof}

For the remainder of this section we fix a communicating set $C$ and assume for notational convenience that 
\[
C = \{1,\ldots, l+1\}.
\]
To construct the birational map $\varphi_C$ we first construct a sequence of mutations $\{\mu_{w_i,F_i}\}_{i \in \{1,\ldots, l\}}$, analogous to these defined in the proof of Proposition~\ref{pro:hypersurface_case}. To do so we must fix factor polynomials $F_i \in \CC[w_i^\bot]$. We inductively define these Laurent polynomials as follows: for any $i \in C$, we set $G_i := \sum_{j \in S_i}{z^{v_i}}$. We define $G^{(0)}_i := G_i$ and set
\[
F_i := \frac{1}{z^{u^+_i}}\tilde{G}^{(i-1)}_i,
\]
for any $i \in \{1,\ldots, l\}$, where $\tilde{G}^{(j)}_i$ is equal to the sum of the terms $z^u$ in $G^{(j)}_i$ such that $\langle w_i,u \rangle = 1$, and $G^{(k)}_i := \mu^\star_{w_k,F_k} G^{(k-1)}_i$ for any $k \in \{1,\ldots, l\}$. Similarly, we write $W_C^{(0)} := W_C$ and define $W_C^{(j)} := \mu^\star_{w_j,F_j}W_C^{(j-1)}$ for all $j \in \{1,\ldots, l\}$. We note that 
\[
W_C^{(j)} = \sum_{i \in C}{G^{(j)}_i}
\]
for all $i \in C$ and $j \in \{1,\ldots, l\}$.

\begin{eg}
	Suppose that $l \geq 2$; we see that, since the exponent $u \in N$ of a term of $G_1$ satisfies $\langle w_1,u \rangle = 1$ if and only if $u=v_i$ for some $i \notin S'_1$, 
	\[
	F_1 = \frac{1}{z^{u^+_1}}\sum_{i \in S_1 \setminus S'_1}{z^{v_i}}.
	\]
	Thus we have that
	\[
	G^{(1)}_2 = \sum_{i \in S_2 \setminus S'_1}{z^{v_i}} + \frac{1}{z^{u^+_1}}\sum_{i \in S_1 \setminus S'_1}{z^{v_i}}\sum_{j \in S_2 \cap S'_1}{z^{v_j}},
	\]
	and hence 
	\[
	F_2 = \frac{1}{z^{u^+_2}}\tilde{G}^{(1)}_2 =  \frac{1}{z^{u^+_2}}\sum_{i \in S_2 \setminus (S'_1 \cup S'_2)}{z^{v_i}} + \frac{1}{z^{u^+_1}z^{u^+_2}}\sum_{i \in S_2 \cap S'_1}{z^{v_i}}\sum_{j \in S_1 \setminus (S'_1 \cup S'_2)}{z^{v_j}},
	\]
\end{eg}
Evidently these polynomials rapidly become too complicated to record and manipulate directly in standard notation. To describe the factor polynomials $F_i$, and the effect of the corresponding mutations on $W$, we introduce new notation for these polynomials.

\begin{notation}
	\label{not:condensed_polynomials}
	For any $i,j \in C$, we define the polynomial $[i,j]$ as follows
	\[
		[i,j] := \frac{1}{z^{u^+_i}}\sum_{k \in S_i \cap S'_j}{z^{v_k}}.
	\]
	Moreover, given elements $a_1,\ldots, a_p \in C$ we introduce the notation
	\begin{align*}
	(a_1,\ldots, a_p) &:= z^{u^+_{a_1}}[a_1,a_2][a_2,a_3]\cdots [a_{p-1},a_p] \\ &= \frac{1}{\prod^{p-1}_{i=2}{z^{u^+_{a_i}}}}\Big(\sum_{k \in S_{a_1} \cap S'_{a_2}}{z^{v_k}}\Big)\Big(\sum_{k \in S_{a_2} \cap S'_{a_3}}{z^{v_k}}\Big)\cdots \Big(\sum_{k \in S_{a_{p-1}} \cap S'_{a_p}}{z^{v_k}}\Big).
	\end{align*}
\end{notation}

In this notation, $G^{(1)}_2$ has the form
	\begin{equation}
	\label{eq:G12}
	G^{(1)}_2 = \sum_{i > 1}{(2,i)} + \sum_{i > 1}{(2,1,i)}
\end{equation}
and $F_2$ has the form
\begin{align}
\label{eq:F2}
\nonumber \sum_{i \notin \{1,2\}}{[2,i]} + [2,1]\sum_{i \notin \{1,2\}}{[1,i]}\\ = \frac{1}{z^{u^+_2}}\Big(\sum_{i \notin \{1,2\}}{(2,i)} + \sum_{i \notin \{1,2\}}{(2,1,i)}\Big).
\end{align}

\begin{lem}
	\label{lem:evaluation}
	Let $u$ be an exponent of a term of the polynomial corresponding to a sequence $(a_1,\ldots, a_p)$ and suppose $i > \max(a_2,\ldots, a_{p-1})$. Then we have that
	\[
	\langle u,w_i\rangle = \begin{cases}
	1 & a_1 = i, a_p \neq i\\
	-1 & a_1 \neq i, a_p=i \\
	0 & \text{otherwise}
	\end{cases}
	\] 
\end{lem}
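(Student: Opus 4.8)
The plan is to expand a general term of the condensed polynomial $(a_1,\ldots,a_p)$, apply $w_i$ by linearity, and show that the ray contributions telescope while the reflection corrections vanish. By Notation~\ref{not:condensed_polynomials}, a typical exponent $u$ of a term of $(a_1,\ldots,a_p)$ is
\[
u \;=\; \sum_{j=1}^{p-1} v_{k_j} \;-\; \sum_{j=2}^{p-1} u^+_{a_j}, \qquad\text{where } k_j \in S_{a_j}\cap S'_{a_{j+1}},
\]
with no reflection terms when $p=2$ (the base case). Since $w_i$ is linear it suffices to evaluate $\langle v_{k_j},w_i\rangle$ and $\langle u^+_{a_j},w_i\rangle$ separately and sum.

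For the ray part I would exploit that $S_1,\ldots,S_{c+1}$ and $S'_1,\ldots,S'_{c+1}$ are each partitions of the ray generators. Thus $v_{k_j}\in S_i$ exactly when $i=a_j$ and $v_{k_j}\in S'_i$ exactly when $i=a_{j+1}$, so \eqref{eq:weight_vector_2} gives
\[
\langle v_{k_j},w_i\rangle \;=\; \delta_{i,a_j}-\delta_{i,a_{j+1}},
\]
an identity which remains correct in the degenerate case $a_j=a_{j+1}$ (both sides vanish). Summing over $j$ telescopes to $\delta_{i,a_1}-\delta_{i,a_p}$, already reproducing the three values $1,-1,0$ depending on whether $i$ equals $a_1$ only, $a_p$ only, or both or neither.

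It remains to kill the corrections $\sum_{j=2}^{p-1}\langle u^+_{a_j},w_i\rangle$, which is where the hypothesis $i>\max(a_2,\ldots,a_{p-1})$ is used: for each $j\in\{2,\ldots,p-1\}$ it forces $a_j<i$, hence $i\neq a_j$. Since the indices relevant to the construction satisfy $i\neq i_{l+1}$ (in the application $i\in\{1,\ldots,l\}$), Lemma~\ref{lem:reflections} yields $\langle u^+_{a_j},w_i\rangle=0$ for every such $j$. Combining with the telescoping sum gives $\langle u,w_i\rangle=\delta_{i,a_1}-\delta_{i,a_p}$, which is precisely the asserted case distinction.

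The delicate point, and the one I would write out most carefully, is the identity $\langle v_{k_j},w_i\rangle=\delta_{i,a_j}-\delta_{i,a_{j+1}}$: it must hold uniformly across the boundary indices $j=1$ and $j=p-1$ and across coincidences $a_j=a_{j+1}$, and it is the disjointness of the partition blocks that makes the relevant membership conditions mutually exclusive. Once this is established, the telescoping and the vanishing of the reflection terms (via Lemma~\ref{lem:reflections} and the maximality hypothesis) are immediate.
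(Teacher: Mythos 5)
Your proof is correct and takes essentially the same route as the paper's own argument: expand the exponent of a term of $(a_1,\ldots,a_p)$ using Notation~\ref{not:condensed_polynomials}, evaluate $w_i$ on the ray generators via \eqref{eq:weight_vector_2} so that the contributions telescope to $\delta_{i,a_1}-\delta_{i,a_p}$, and observe that $w_i$ vanishes on the reflection vectors $u^+_{a_j}$. If anything, you are more careful than the paper, which simply asserts that $w_i$ kills all the $u^+_{a_j}$ without flagging that this relies on the hypothesis $i>\max(a_2,\ldots,a_{p-1})$ together with the proviso $i\neq i_{l+1}$ implicit in Lemma~\ref{lem:reflections}.
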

\begin{proof}
	Considering the expression
	\[
		(a_1,\ldots, a_p) = \frac{1}{\prod^{p-1}_{j=2}{z^{u^+_{a_j}}}}\Big(\sum_{k \in S_{a_1} \cap S'_{a_2}}{z^{v_k}}\Big)\Big(\sum_{k \in S_{a_2} \cap S'_{a_3}}{z^{v_k}}\Big)\cdots \Big(\sum_{k \in S_{a_{p-1}} \cap S'_{a_p}}{z^{v_k}}\Big),
	\]
	we note that $w_i$ evaluates to zero on all the vectors $u^+_j$. Moreover, $w_i$ evaluates to $\pm 1$ on vectors $v_j$ for $j \in S_i$ and $S'_i$ respectively and evaluates to zero on all other ray generators, from which the result follows.
\end{proof}

It may be easily shown by induction that, up to a product with a monomial, the polynomials $G^{(j)}_i$ and $F_j$ for $i \in \{1,\ldots, l+1\}$ and $j \in \{1,\ldots, l\}$ are sums of terms of the form $P$, where $P$ is a polynomial represented by a sequence, as described in Notation~\ref{not:condensed_polynomials}. We introduce an additional combinatorial notion in order to give a more detailed description of these polynomials.

\begin{dfn}
	We say that a sequence of numbers $a_1,\ldots, a_p$ in $\ZZ_{> 0}$ is \emph{hilly} if whenever $a_i = a_j$ for a pair $j > i$, there is a $k \in \{i, \ldots, j\}$ such that $a_k > a_i$. We note that the empty sequence is hilly.
\end{dfn}

For any $0 \leq j < i \leq l$, we let $\cM(i,j)$ denote the class of sequences $(a_1,\ldots, a_p)$ such that $a_1 = i$, $a_k \leq j$ for all $k \in \{2, \ldots, p-1\}$, $a_p > j$, and the sequence $(a_2,\ldots, a_{p-1})$ is hilly (noting that it may be empty).
 
\begin{lem}
	\label{lem:intermediate_description}
	For any $0 \leq j < i \leq (l+1)$, $G^{(j)}_i$ is a sum of terms of the form $P$, where
	\[
	P = (a_1,\ldots, a_p) \in \cM(i,j).
	\]
	The factors $F_i$ are sums of terms of the form $z^{-u^+_i}P$, where $P \in \cM(i,i-1)$ and $a_p > i$. Moreover, any $P$ of the class described appears in precisely one term of each of $G^{(j)}_i$ and $F_i$.
\end{lem}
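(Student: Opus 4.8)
The plan is to prove the two assertions about the $G^{(j)}_i$ and the $F_i$ simultaneously, by induction on $j$, with the combinatorics of hilly sequences controlling the bookkeeping. For the base case $j=0$ I would observe that $\cM(i,0)$ consists precisely of the length-two sequences $(i,m)$ with $m \in C$ (the constraint $a_k \le 0$ forces the middle to be empty, and $a_p > 0$ is automatic), and that $(i,m) = \sum_{k \in S_i \cap S'_m} z^{v_k}$; summing over $m$ and using that each ray of $S_i$ lies in a unique $S'_m$ --- necessarily with $m \in C$, since $C$ is a communicating set --- recovers $G^{(0)}_i = G_i = \sum_{k \in S_i} z^{v_k}$, with each sequence contributing a single term.

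For the inductive step, assuming the description of $G^{(j-1)}_i$ for all admissible $i$, I would first read off the factor $F_j$. By Lemma~\ref{lem:evaluation} applied with weight index $j$ --- valid since every middle entry of a sequence in $\cM(j,j-1)$ is $\le j-1 < j$ --- a term $(a_1,\ldots,a_p) \in \cM(j,j-1)$ has $\langle u, w_j\rangle = 1$ exactly when $a_p \neq j$, i.e.\ $a_p > j$, and $0$ when $a_p = j$; hence $\tilde{G}^{(j-1)}_j$ collects precisely the sequences with $a_p > j$, giving the stated form of $F_j = z^{-u^+_j}\tilde{G}^{(j-1)}_j$. Now fix $i > j$ and apply $\mu^\star_{w_j,F_j}$ termwise to $G^{(j-1)}_i = \sum_{P \in \cM(i,j-1)} P$. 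Since $a_1 = i \neq j$, Lemma~\ref{lem:evaluation} gives $\langle\cdot,w_j\rangle = 0$ on $P$ when $a_p > j$ (so $P$ is unchanged) and $\langle\cdot,w_j\rangle = -1$ when $a_p = j$ (so $P \mapsto P\cdot F_j$); no $+1$ case occurs, which is also why no negative powers of $F_j$ appear and the mutation is genuinely polynomial.

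The heart of the argument is then to match the resulting polynomial with $\sum_{Q \in \cM(i,j)} Q$. The unchanged terms, those $P \in \cM(i,j-1)$ with $a_p > j$, are exactly the elements of $\cM(i,j)$ whose middle contains no entry equal to $j$. For the remaining terms I would use the key identity that, for $P = (a_1,\ldots,a_{m-1},j)$ and $R = (j,b_2,\ldots,b_q)$, the product $z^{-u^+_j}P\,R$ equals the concatenated sequence $(a_1,\ldots,a_{m-1},j,b_2,\ldots,b_q)$; since $z^{u^+_j}F_j = \tilde{G}^{(j-1)}_j$ is the sum over $R \in \cM(j,j-1)$ with $b_q > j$, the products $P\cdot F_j$ produce exactly these concatenations. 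To finish I would establish the bijection: a sequence $Q \in \cM(i,j)$ whose middle meets $\{j\}$ contains $j$ \emph{exactly once} among its middle entries --- a repetition $a_s = a_t = j$ with $s<t$ in the middle would, by hilliness, force a strictly larger middle entry between them, contradicting the bound $\le j$ --- and splitting $Q$ at this unique occurrence recovers a unique pair $(P,R)$ of the required type, with hilliness of the two halves and of $Q$ matching up because the lone $j$ strictly dominates every other middle entry.

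The main obstacle is precisely this last combinatorial bijection: verifying that the hilly condition is exactly what guarantees a unique occurrence of $j$ in the middle, hence a unique factorisation, and checking that concatenation both preserves and reflects hilliness. Granting this, the multiplicity-one claim in the statement is immediate, since each $P$ and each $R$ occur once by the inductive hypothesis, the unchanged and concatenated families are disjoint (they differ by whether a middle $j$ is present), and distinct $Q$ arise from distinct inputs.
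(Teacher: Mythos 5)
Your proposal is correct and follows essentially the same route as the paper: induction using Lemma~\ref{lem:evaluation} to sort terms by the value of $\langle \cdot, w_j\rangle$, with the mutation acting by concatenation of sequences at the distinguished entry $j$. You in fact supply more detail than the paper does on the multiplicity-one claim — in particular the observation that hilliness forces a middle entry equal to $j$ to occur exactly once, yielding the unique factorisation — which the paper only sketches with ``a similar inductive argument.''
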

\begin{proof}
	We argue by induction on $j$ that $G^{(j)}_i$ satisfies the hypotheses given in the statement of Lemma~\ref{lem:intermediate_description}.  Moreover, we argue by induction on $i$ that $F_i$ also satisfies the required hypotheses.
	
	First note that the hypotheses for $G^{(j)}_i$ are easily verified for any $i \in \{1,\ldots,l+1\}$ and $j=0$. Noting that
	\[
	F_1 := \frac{1}{z^{u^+_1}}\sum_{i > 1}(1,i),
	\]
	which is trivially of the required form, we have verified the hypotheses for $j < i = 1$. Considering $i=2$ we need to verify that $G^{(1)}_2$ and $F_2$ have the required form. Considering the expressions given in \eqref{eq:G12} and \eqref{eq:F2}, these also have the required form. We assume that $G^{(j)}_i$ and $F_k$ have the form required for some $j < i$ and all $k \in \{1,\ldots, i\}$. Let $P_G = (a_1\ldots, a_p)$ be a sequence corresponding to a term of $G^{(j)}_i$, and recall that $G^{(j+1)}_i = \mu^\star_{w_{j+1},F_{j+1}}G^{(j)}_i$. The sequence $P_G$ certainly satisfies $a_1 = i$. Moreover, if $a_p > j+1$ then, by Lemma~\ref{lem:evaluation}, we have that  $\mu^\star_{w_{j+1},F_{j+1}} P_G = P_G$. If however $a_p = j+1$, then $\mu^\star_{w_{j+1},F_{j+1}}P_G$ is a sum of terms $P$, where $P = (a_1,\ldots,a_p,b_1,\ldots, b_q)$ is given by concatenating $P_G$ with a sequence $P_F = (b_1,\ldots, b_q)$. Applying the inductive hypothesis, we have that $b_q > j+1$ and that the sequence corresponding to $P$ is hilly.
	
	We next check that $F_{i+1}$ has the required form. Note that since we have shown that $G^{(i-1)}_i$ has the required form, it remains to compute the terms which lie in $\tilde{G}^{(i-1)}_i$. It follows immediately from Lemma~\ref{lem:evaluation} that these are precisely those terms associated to sequences $(a_1,\ldots, a_p)$ where $a_p \neq a_1$. Since $a_1 = i$ and $a_p > j = i-1$, $F_i$ has the required form. We note that we can show that every sequence $P$ of the required class appears precisely once using a similar inductive argument. Indeed, the sequences corresponding to terms of $G^{(j)}_i$ either do not contain $(j-1)$, in which case the sequence appears in $G^{(j-1)}_i$, or they do so precisely once, in which case they are given by the concatenation of sequences appearing -- by the inductive hypothesis -- in $G^{(j-1)}_i$ and $F_j$ respectively.
\end{proof}

Note that the case $i=l+1$, $j=l$ of Lemma~\ref{lem:intermediate_description} gives a description of the polynomial $G^{(l)}_{l+1}$; we now describe the polynomials $G^{(l)}_i$ for $i \in \{1,\ldots l\}$.

\begin{pro}
	\label{pro:combinatorial_description}
	For each $i \in \{1,\ldots, l\}$, the polynomial $G^{(l)}_i$ has the form $z^{u^+_i} + \sum P$, where each polynomial $P$ is associated to a sequence $(a_1,\ldots a_p) \in \cM(i,i-1)$ such that $a_p = i$. Moreover, any $P$ of the class described appears in precisely one term of $G^{(l)}_i$.
\end{pro}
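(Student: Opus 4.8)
The plan is to push the inductive description of Lemma~\ref{lem:intermediate_description} past its stated range $j < i$ by running the remaining mutations $\mu_{w_i,F_i}, \mu_{w_{i+1},F_{i+1}}, \ldots, \mu_{w_l,F_l}$ on $G^{(i-1)}_i$, and to show that already after the single step $\mu_{w_i,F_i}$ the polynomial $G^{(i)}_i$ has the asserted shape and is then fixed by all later mutations. First I would invoke Lemma~\ref{lem:intermediate_description} with $j=i-1$ (legitimate since $i-1 < i \leq l+1$), so that $G^{(i-1)}_i$ is the sum, with each sequence occurring exactly once, of the terms $P=(a_1,\ldots,a_p)\in\cM(i,i-1)$. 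Every such $P$ has $a_1=i$ and $\max(a_2,\ldots,a_{p-1})\leq i-1<i$, so Lemma~\ref{lem:evaluation} applies with weight vector $w_i$: a monomial $z^u$ coming from $P$ satisfies $\langle u,w_i\rangle=0$ exactly when $a_p=i$, and $\langle u,w_i\rangle=1$ exactly when $a_p>i$ (recall $a_p>j=i-1$, hence $a_p\geq i$). Thus the terms with $a_p>i$ are precisely those assembled into $\tilde{G}^{(i-1)}_i$, and by definition $F_i = z^{-u^+_i}\tilde{G}^{(i-1)}_i$.

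The heart of the argument is then the collapse computation. Applying $\mu^\star_{w_i,F_i}$, the terms with $a_p=i$ (on which $\langle u,w_i\rangle=0$) are fixed, while the terms with $a_p>i$ are each multiplied by $F_i^{-\langle u,w_i\rangle}=F_i^{-1}$; since these terms sum to $\tilde{G}^{(i-1)}_i$, their total contribution is $\tilde{G}^{(i-1)}_i\cdot F_i^{-1}=\tilde{G}^{(i-1)}_i\cdot z^{u^+_i}\big(\tilde{G}^{(i-1)}_i\big)^{-1}=z^{u^+_i}$. Hence $G^{(i)}_i = z^{u^+_i} + \sum_P P$, where $P$ ranges over the sequences in $\cM(i,i-1)$ with $a_p=i$, each appearing once. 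This is exactly the form claimed in Proposition~\ref{pro:combinatorial_description}.

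It remains to check that the subsequent mutations $\mu_{w_m,F_m}$ for $m\in\{i+1,\ldots,l\}$ fix $G^{(i)}_i$, so that $G^{(l)}_i=G^{(i)}_i$. Since $\mu^\star_{w_m,F_m}$ acts monomial-by-monomial via $z^u\mapsto z^uF_m^{-\langle u,w_m\rangle}$, it suffices to verify $\langle u,w_m\rangle=0$ for every term $z^u$ of $G^{(i)}_i$. For the monomial $z^{u^+_i}$ this is Lemma~\ref{lem:reflections}, which gives $w_m(u^+_i)=0$ for $m\notin\{i,i_{l+1}\}$, and each such $m$ lies strictly between $i$ and $l+1=i_{l+1}$. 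For a term from $P=(a_1,\ldots,a_p)\in\cM(i,i-1)$ with $a_p=i$ we have $a_1=a_p=i<m$ and $\max(a_2,\ldots,a_{p-1})\leq i-1<m$, so Lemma~\ref{lem:evaluation} yields $\langle u,w_m\rangle=0$. Therefore $G^{(i)}_i$ is stable under all later mutations, giving $G^{(l)}_i=G^{(i)}_i$; the uniqueness (“each $P$ appears once”) is inherited directly from Lemma~\ref{lem:intermediate_description}, because the $a_p=i$ terms pass through unchanged, with no cancellation or merging.

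The main obstacle is the middle collapse step: recognising that the factor $F_i$ is engineered precisely so that the entire $\langle\,\cdot\,,w_i\rangle=1$ part of $G^{(i-1)}_i$ contracts to the single monomial $z^{u^+_i}$, together with the bookkeeping—via Lemma~\ref{lem:evaluation} and the defining properties of the reflection vectors in Lemma~\ref{lem:reflections}—needed to certify that none of the resulting terms is disturbed by the mutations indexed by $m>i$.
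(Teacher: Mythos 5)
Your proof is correct and follows essentially the same route as the paper: apply Lemma~\ref{lem:intermediate_description} at $j=i-1$, observe via Lemma~\ref{lem:evaluation} that the $a_p>i$ part of $G^{(i-1)}_i$ is exactly $\tilde{G}^{(i-1)}_i$ and collapses to $z^{u^+_i}$ under division by $F_i$, and then note that the later mutations act trivially so $G^{(l)}_i=G^{(i)}_i$. You are in fact slightly more careful than the paper in the last step, explicitly invoking Lemma~\ref{lem:reflections} to handle the monomial $z^{u^+_i}$, which Lemma~\ref{lem:evaluation} alone does not cover.
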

\begin{proof}
	We first note that, by Lemma~\ref{lem:evaluation}, we have that $G^{(l)}_i = G^{(i)}_i$. Thus we need to compute the polynomial $\mu^\star_{w_i,F_i}G^{(i-1)}_i$. Applying Lemma~\ref{lem:intermediate_description}, $G^{(i-1)}_i$ is a sum of terms of the form $P$, where $P$ is the polynomial associated to a sequence $(a_1,\ldots a_p) \in \cM(i,i-1)$. The mutation divides the terms such that $a_p > i$ by the factor polynomial $F_i$. This replaces the sum of all terms associated to sequences such that $a_p > i$ with the monomial $z^{u^+_i}$, and does not alter the remaining terms in $G^{(i-1)}_i$; that is, those associated to sequences such that $a_p = i$.
\end{proof}

\begin{proof}[Proof of Theorem~\ref{thm:alt_main_result}]
	We consider a diagram analogous to \eqref{eq:hypersurface_diagram}:
	\[
		\xymatrix{
			& \TT_N \ar^{ T_{u_c^+,u_c^-} \circ \cdots \circ T_{u_1^+,u_1^-}}[rrr] \ar@{-->}_{\mu_{w_1,F_1}\circ \cdots\circ\mu_{w_c,F_c}}[dl]  & & & \TT_N \ar@{-->}^{\mu_{-w_1,F'_1}\circ \cdots\circ\mu_{-w_c,F'_c}}[dr] & \\
			\TT_N  & & & & & \TT_N \ar@{-->}_{\varphi_C}[lllll]
		}
	\]
	We define $F'_j$ analogously to $F_i$, interchanging the roles of $S_i$ and $S'_i$. In particular, we also define $G'_i := \sum_{j \in S'_i}{z^{v_j}}$, ${G'}^{(0)}_i := G'_i$, and ${G'}^{(j)}_i := \mu^\star_{-w_j,F'_j}{G'}^{(j-1)}$. We note that it suffices to verify that $G^{(l)}_i$ and ${G'}^{(l)}_i$ are identified by $T := T_{u_l^+,u_l^-} \circ \cdots \circ T_{u_1^+,u_1^-}$ for any $i \in \{1,\ldots,l+1\}$. Indeed, recalling that $W_C = \sum_{i=1}^{l+1}{G_i}$, we have that
	\begin{align*}
	 W_C^{(l)} &= (\mu_{w_1,F_1}\circ \cdots\circ\mu_{w_c,F_c})^\star W_C \\ &= \sum_{i=1}^{l+1}{G^{(l)}_i}.
	\end{align*}
	
	We note that, by Lemma~\ref{lem:intermediate_description} and Proposition~\ref{pro:combinatorial_description}, the polynomials of $G^{(l)}_i$ are sums of polynomials corresponding to sequences $(a_1,\ldots, a_p)$, and that $a_1=a_p$ for all $i \in C$ (together with the monomial $z^{u^+_i}$ if $i \neq l+1$). We recall that, by definition,
	\[
	(a_1,\ldots, a_p) = z^{u^+_{a_1}}[a_1,a_2][a_2,a_3]\cdots [a_{p-1},a_p].
	\]
	Recalling that $T_{u_i^+,u_i^-}$ fixes each $z^{u^+_j}$ for $j \neq i$, and that $T_{u_i^+,u_i^-}(z^{u^\pm_i})= z^{u^\mp_i}$, it remains to show that, up to exchanging vectors $u^+_j$ and $u^-_j$, if the polynomial associated to a sequence $(a_1,\ldots, a_p)$ appears in a term of ${G'}^{(l)}_i$, then $P$ also appears as a term in $G^{(l)}_i$. Exchanging the roles of $S_i$ and $S'_i$, writing
	\[
		[i,j]' := \frac{1}{z^{u^-_i}}\sum_{k \in S'_i \cap S_j}{z^{v_k}},
	\]
	and applying Proposition~\ref{pro:combinatorial_description}, sequences $P'$ appearing in terms of ${G'}^{(l)}_i$ are of the form
	\[
	P' = z^{u^-_{a_1}}[a_1,a_2]'[a_2,a_3]'\cdots [a_{p-1},a_p]'.
	\]
	We thus need to show that terms of the form
	\[
	P = z^{u^+_{a_p}}[a_2,a_1][a_3,a_2]\cdots [a_p,a_{p-1}]
	\]
	appear as terms in $G^{(l)}_i$ (noting that the initial factor of $z^{u^+_{a_p}}$ is required to obtain the correct denominator, see Notation~\ref{not:condensed_polynomials}), and that all terms of $G^{(l)}_i$ are of this form. However this is easily verified: changing the order of multiplication, we have that
	\[
	P =  z^{u^+_{a_p}}[a_p,a_{p-1}][a_{p-1},a_{p-2}]\cdots [a_2,a_1] = (a_p,\ldots, a_1)
	\]
	and the result follows from the fact that $a_p=a_1$, and that the class of hilly sequences is palindromic; in the sense that if $(a_p,\ldots, a_1)$ is a hilly sequence, so is $(a_1,\ldots, a_p)$.
\end{proof}
\section{Amenable collections}
\label{sec:amenable}

We recall the notion of an \emph{amenable collection subordinate to a nef partition}, and describe how to use algebraic mutations to form a Laurent polynomial mirror from a nef partition together with an amenable collection.

\begin{dfn}[C.f.\ {\cite[Definition~$2.2$]{Doran--Harder}}]
	Given a nef partition $\{S_1,\ldots,S_c\}$ of $(Y; L_1,\ldots, L_c)$, an amenable collection of vectors subordinate to $\{S_1,\ldots,S_c\}$	is a set of vectors $V = \{u_1,\ldots, u_c\} \subset M$ such that
	\begin{enumerate}
		\item For each $i \in \{1,\ldots, c\}$, we have $\langle u_i,S_i \rangle = -1$.
		\item For each $j$ such that $i < j < k$, we have $\langle u_i,S_j \rangle \geq 0$.
		\item For each $j$ such that $0 < j < i$, we have $\langle u_i,S_j \rangle = 0$.
	\end{enumerate}
\end{dfn}

Given such collection of vectors we can form a mirror Laurent polynomial as follows. First, we recall that $\breve{X} \subset \TT_N$ is defined to be the vanishing locus of polynomials $(G_i-1)$, where $G_i := \sum_{j \in S_i}{z^{v_j}}$ for $i \in \{1,\ldots, c\}$. Fixing distinguished elements $s_i \in S_i$ for each $i \in \{1,\ldots, c\}$, we consider a sequence of mutations defined as follows: for each $i$ we fix the weight vector $u_i$, and a factor polynomial $F_i$. The polynomials $F_i$ are inductively defined by setting $G^{(0)}_i := G_i$ for all $i \in \{1,\ldots, c\}$ and setting
\[
F_i := \frac{1}{z^{v_{s_i}}}G^{(i-1)}_i,
\]
where $G^{(j)}_i := \mu^\star_{w_j,F_j}G^{(j-1)}_i$. We let
\[
\breve{X}_1 := \{z \in \TT_N : z^{v_{s_1}} = \cdots = z^{v_{s_c}} = 1\},
\]
and note that the birational map $\phi$ formed by composing the mutations $\mu^\star_{w_i,F_i}$ for $i \in \{1,\ldots, c\}$ maps $\breve{X}_1$ to $\breve{X}$.

\begin{pro}
	\label{pro:residue}
	The restriction of the residue of the logarithmic form 
	\[
	\Omega := \mu^\star_{u_c,F_c}\circ \cdots \circ \mu^\star_{u_1,F_1} \Big( \frac{\omega_n}{(G_1-1)\cdots (G_c-1)} \Big)
	\]
	to $\breve{X}_1$ is the holomorphic volume form  $\omega_{n-c} = \bigwedge^{n-c}_{i=1}d\log(y_i)$, where the functions $y_i$ are co-ordinates on $\breve{X}_1 \cong (\CC^\star)^{n-c}$.
\end{pro}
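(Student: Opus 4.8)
The plan is to show that the composite of the mutations $\mu_{u_i,F_i}$ monomialises each defining polynomial $G_i$, so that the pulled-back form $\Omega$ acquires simple logarithmic poles exactly along the coordinate subtorus $\breve{X}_1$, after which the iterated Poincar\'e residue is a routine computation.

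First I would check that the mutations are well defined, i.e.\ that $F_i \in \CC[u_i^\bot]$. Since $F_i = z^{-v_{s_i}}G^{(i-1)}_i$ and $s_i \in S_i$ gives $\langle u_i, v_{s_i}\rangle = -1$ by condition (i) of the amenable collection, it suffices to prove that every term $z^u$ of $G^{(i-1)}_i$ satisfies $\langle u_i, u\rangle = -1$. I would establish this by a simultaneous induction: the terms of $G_i$ are the $z^{v_k}$ with $k \in S_i$, on which $u_i$ evaluates to $-1$ by (i); and for $j < i$ each factor $F_j$ lies in $\CC[u_i^\bot]$ because, by condition (iii), $u_i$ annihilates $S_j$ and $v_{s_j}$. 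As the mutation $\mu^\star_{u_j,F_j}$ alters a monomial only by a power of $F_j$, it preserves the $u_i$-weight of every term, so $G^{(i-1)}_i$ inherits the constant weight $-1$ from $G_i$.

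The heart of the argument is the monomialisation identity
\[
\mu^\star_{u_c,F_c}\circ\cdots\circ\mu^\star_{u_1,F_1}(G_i) = z^{v_{s_i}}, \qquad i \in \{1,\ldots,c\}.
\]
The decisive point is that the $i$-th mutation in the composite acts on $G^{(i-1)}_i$, all of whose terms have $u_i$-weight $-1$, and that $F_i = z^{-v_{s_i}}G^{(i-1)}_i$ is precisely the factor needed to cancel $G^{(i-1)}_i$ against itself and collapse the whole polynomial to the single monomial $z^{v_{s_i}}$; this is the exact analogue of the cancellation step in Proposition~\ref{pro:hypersurface_case}. It then remains to see that the later mutations $\mu^\star_{u_j,F_j}$ with $j > i$ fix $z^{v_{s_i}}$, which is immediate from Definition~\ref{dfn:mutation} together with condition (iii), giving $\langle u_j, v_{s_i}\rangle = 0$ for $j > i$. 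I expect this step -- propagating the inductively defined factors $F_i$ through the entire sequence of mutations, and carefully matching the sign conventions of Definition~\ref{dfn:mutation} against the signs in the amenable conditions -- to be the main obstacle, and the place where the combinatorial bookkeeping must be done most carefully.

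Granting the monomialisation identity, the conclusion is quick. Because every mutation is volume preserving, the composite fixes $\omega_n$, and therefore
\[
\Omega = \frac{\omega_n}{(z^{v_{s_1}}-1)\cdots(z^{v_{s_c}}-1)},
\]
a form with a simple pole along each hypersurface $\{z^{v_{s_i}} = 1\}$ whose common intersection is $\breve{X}_1$. The matrix $\big(\langle u_i, v_{s_j}\rangle\big)_{i,j}$ is lower triangular with $-1$ on the diagonal by conditions (i) and (iii), so $v_{s_1},\ldots,v_{s_c}$ are linearly independent and $\breve{X}_1$ is a subtorus of dimension $n-c$. Finally I would split $N$ so that $v_{s_1},\ldots,v_{s_c}$ form part of a basis, write $x_i := z^{v_{s_i}}$ and let $y_1,\ldots,y_{n-c}$ be the remaining coordinates; then
\[
\Omega = \frac{dx_1}{x_1(x_1-1)}\wedge\cdots\wedge\frac{dx_c}{x_c(x_c-1)}\wedge\bigwedge_{j=1}^{n-c}d\log(y_j),
\]
and taking the iterated residue along $x_1 = \cdots = x_c = 1$ contributes a factor $x_i^{-1}|_{x_i=1} = 1$ from each variable, leaving precisely $\bigwedge_{j=1}^{n-c}d\log(y_j) = \omega_{n-c}$ on $\breve{X}_1$, as required.
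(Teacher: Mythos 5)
Your strategy coincides with the paper's: monomialise each $G_i$ to $z^{v_{s_i}}$ by the sequence of mutations, use volume preservation to rewrite $\Omega$ as $\omega_n/\prod_i(z^{v_{s_i}}-1)$, and compute the iterated residue in coordinates adapted to the $v_{s_i}$. You in fact supply two steps the paper only asserts: the inductive check that $F_i \in \CC[u_i^\bot]$, and the monomialisation identity itself (the paper's proof begins ``pulling back along these mutations, we find that\dots''). The sign tension you flag is real, but it is an inconsistency of the paper's conventions rather than of your argument: taking Definition~\ref{dfn:mutation} literally, $\mu^\star_{w,F}(z^v)=z^vF^{-\langle w,v\rangle}$, together with condition (i), $\langle u_i,S_i\rangle=-1$, the $i$-th mutation \emph{multiplies} $G^{(i-1)}_i$ by $F_i$ rather than dividing; one must mutate with weight $-u_i$ (or flip one of the two sign conventions) for the collapse of $G^{(i-1)}_i$ to the single monomial $z^{v_{s_i}}$ to take place. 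With that normalisation your cancellation argument, and your observation that the later mutations fix $z^{v_{s_i}}$ because $\langle u_j,v_{s_i}\rangle=0$ for $j>i$, are correct.

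The one inference that does not hold as written is the final lattice-theoretic step. From the triangular matrix $\big(\langle u_i,v_{s_j}\rangle\big)$ you conclude only that $v_{s_1},\ldots,v_{s_c}$ are \emph{linearly independent}, and then propose to ``split $N$ so that $v_{s_1},\ldots,v_{s_c}$ form part of a basis''. Over $\ZZ$, linear independence does not give such a splitting: the vector $2\in\ZZ$ is independent but not part of a basis, and correspondingly $\{z^2=1\}\subset\CC^\star$ is a pair of points, not a subtorus. This is precisely the point on which your residue computation depends, since you need $\breve{X}_1$ to be a connected $(n-c)$-dimensional torus and the $z^{v_{s_i}}$ to be members of an honest coordinate system. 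The repair is contained in what you already established: the matrix is triangular with diagonal entries $-1$, hence unimodular, so the map $N\to\ZZ^c$, $v\mapsto(\langle u_i,v\rangle)_i$, restricts to an isomorphism on the sublattice spanned by the $v_{s_i}$; consequently $N$ splits as the direct sum of that sublattice and the kernel of this map, and the $v_{s_i}$ do extend to a $\ZZ$-basis. This is exactly the role that unimodularity plays in the paper, which instead invokes \cite[Proposition~$2.3$]{Doran--Harder} to extend $u_1,\ldots,u_c$ to a basis of $M$ and checks that $\{v_{s_1},\ldots,v_{s_c}\}\cup\{u^\star_{c+1},\ldots,u^\star_n\}$ pairs with it by a triangular unimodular matrix. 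Your $c\times c$ determinant argument is a leaner substitute that avoids citing that proposition, but you must use the determinant, not mere independence, to close the argument.
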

\begin{proof}
Pulling back along these mutations, and recalling that
\[
\omega_n := \bigwedge^n_{i=1}d\log(z_i),
\]
we find that
\[
\Omega= \frac{\omega_n}{(z^{v_{s_1}}-1)\cdots (z^{v_{s_c}}-1)}.
\]
By \cite[Proposition~$2.3$]{Doran--Harder}, $u_1, \ldots, u_c$ can be extended to a basis $u_1, \ldots, u_n$ of $M$. Consider the vectors $\cB := \{v_{s_1}, \ldots, v_{s_c}\} \cup \{u^\star_{c+1},\ldots u^\star_n\}$; where the vectors $u^\star_j$ are elements of the basis of $N$ dual to $\{u_1, \ldots, u_n\}$. The matrix formed by pairing the elements of $\cB$ with elements of the basis $\{u_1, \ldots, u_n\}$ is lower triangular, with diagonal entries equal to $\pm 1$. That is, this matrix is unimodular, and hence $\cB$ is a basis of $N$. In particular we may relabel the variables $z_i$ such that $z_{s_i} = z^{v_{s_i}}$ for all $i \in \{1,\ldots, c\}$.

The residue of the form $\Omega$ along $\breve{X}_1$ is given by
\[
\frac{\omega_n}{dz_{s_1}\cdots dz_{s_c}}.
\]
Restricting to $\breve{X}_1$, this form coincides with the holomorphic form $\omega_{n-c}$.
\end{proof}

\begin{proof}[Proof of Corollary~\ref{cor:amenable_collection}]
	Fix a pair of nef partitions $\{S_1,\ldots, S_c\}$ and $\{S'_1,\ldots, S'_c\}$ and an amenable collection subordinate to each. We recall that $G_i = \sum_{j \in S_i}{z^{v_i}}$, and $G'_i = \sum_{j \in S'_i}{z^{v_i}}$. Let $\varphi$ be the birational map given by Theorem~\ref{thm:main_result} and observe that
	\[
	\varphi^\star\Big(\frac{\omega_n}{(1-G_1)\cdots(1-G_c)}\Big) = \frac{\omega_n}{(1-G'_1)\cdots(1-G'_c)}.
	\]
	Setting
	\[
	\breve{X}_2 := \{z \in \TT_N : z^{v_{s'_1}} = \cdots = z^{v_{s'_c}} = 1\},
	\]
	where $\phi'$ denotes the birational map -- analogous to $\phi$ -- from $\breve{X}_2$ to $\breve{X}$. Applying Proposition~\ref{pro:residue} to both $\phi$ and $\phi'$, we have that
	\[
	({\phi'}^{-1} \circ \varphi \circ \phi)^\star\omega_{n-c} = \omega_{n-c}.
	\]
	Moreover, since $\varphi^\star W = W$, this composition identifies the Laurent polynomials defined on $\breve{X}_1$ and $\breve{X}_2$ respectively.
\end{proof}

\bibliographystyle{plain}
\bibliography{bibliography}
\end{document}